\documentclass{endm}
\usepackage{endmmacro}
\usepackage{graphicx}
\usepackage{amsmath}

\def\Str{\mathop{\mathrm{Str}}\nolimits}
\def\dom{\mathop{\mathrm{Dom}}\nolimits}

\def\str#1{\mathbf {#1}}

\def\arity#1{a(\rel{}{#1})}
\def\nbrel#1#2{R_{#1}^{#2}}
\def\rel#1#2{R_{\mathbf{#1}}^{#2}}
\def\func#1#2{F_{\mathbf{#1}}^{#2}}
\def\nbfunc#1#2{F_{#1}^{#2}}

\def\K{{\mathcal K}}
\def\Fraisse{Fra\"{\i}ss\' e}

\begin{document}

% DO NOT REMOVE: Creates space for Elsevier logo, ScienceDirect logo
% and ENDM logo
\begin{verbatim}\end{verbatim}\vspace{2.5cm}
% REMOVE FOR ENDM
\pagestyle{plain}
\begin{frontmatter}

\title{Ramsey theorem for designs}

\author{Jan Hubi\v cka\thanksref{ERC}\thanksref{hubicka@iuuk.mff.cuni.cz}}
\author{Jaroslav Ne\v set\v ril\thanksref{ERC}\thanksref{nesetril@iuuk.mff.cuni.cz}}
\address{Computer Science Institute of Charles University (IUUK)\\ Charles University\\ Prague, Czech Republic}

\thanks[ERC]{Supported by grant ERC-CZ LL-1201 of the Czech Ministry of Education and CE-ITI P202/12/G061 of GA\v CR.}
\thanks[hubicka@iuuk.mff.cuni.cz]{Email: \href{mailto:hubicka@iuuk.mff.cuni.cz} {\texttt{\normalshape{hubicka@iuuk.mff.cuni.cz}}}}
\thanks[nesetril@iuuk.mff.cuni.cz]{Email: \href{mailto:nesetril@iuuk.mff.cuni.cz} {\texttt{\normalshape{nesetril@iuuk.mff.cuni.cz}}}}
\begin{abstract}
We prove that for any choice of parameters $k,t,\lambda$ the class of all finite ordered designs with parameters $k,t,\lambda$ is a Ramsey class.
\end{abstract}
\begin{keyword}
Ramsey class, homogeneous structure, design, Steiner system
\end{keyword}
\end{frontmatter}
\section{Introduction}
We prove that for every choice of parameters $2\leq t\leq k$ and $1\leq \lambda$ the class $\overrightarrow{P\mathcal D}_{kt\lambda}$ of linearly
ordered partial designs with parameters $k,t,\lambda$ is a Ramsey class. Thus, together with the 
recent spectacular results of Keevash~\cite{Keevash2014}, one obtains that the class of linearly ordered designs $\overrightarrow{\mathcal D}_{kt\lambda}$ is a Ramsey class.

This paper involves three seemingly unrelated subjects: block designs, model theory and structural Ramsey theory.
The generality is an important issue and in such context our main result can be formulated as follows:
%Latter we introduce necessary notions.
\begin{thm}
\label{thm:designs}
For any choice of parameters $k,t,\lambda$ the class $\overrightarrow{\mathcal D}_{kt\lambda}$ of all finite ordered designs with parameters $k,t,\lambda$ is a Ramsey class.
\end{thm}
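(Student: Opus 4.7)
I would split the theorem into a structural Ramsey part for ordered \emph{partial} designs and a completion step that passes to full designs by invoking Keevash's theorem~\cite{Keevash2014}. Let $\overrightarrow{P\mathcal D}_{kt\lambda}$ denote the class of finite linearly ordered point sets equipped with a $k$-uniform hypergraph of blocks in which every $t$-set is covered by \emph{at most} $\lambda$ blocks. This class is much friendlier than $\overrightarrow{\mathcal D}_{kt\lambda}$ itself: it is hereditary, abundant in small members, and amenable to the standard toolbox for Ramsey classes, whereas $\overrightarrow{\mathcal D}_{kt\lambda}$ has very few small objects and is not an amalgamation class in any direct sense.

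For $\overrightarrow{P\mathcal D}_{kt\lambda}$ I would regard a partial design as a relational structure in a language consisting of the linear order and a single $k$-ary block predicate; the class is then defined by a single family of forbidden configurations, the ``sunflowers'' made up of a $t$-set $S$ together with $\lambda+1$ distinct $k$-blocks through $S$, each sunflower being irreducible in the sense that all its blocks meet at the core $S$. The plan is to verify that $\overrightarrow{P\mathcal D}_{kt\lambda}$ is a locally finite subclass of a free amalgamation class in the sense of the Hubi\v cka--Ne\v set\v ril Ramsey framework and then apply the corresponding general Ramsey theorem. Amalgamation is \emph{not} literally free here: two partial designs glued along a common substructure $\str A$ can each contribute blocks through the same $t$-set $S\subseteq A$, pushing the total past $\lambda$. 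I would repair this by equipping every partial design with its closure $\Cl$ marking saturated $t$-sets, and then amalgamating only closed partial designs; the resulting notion of amalgamation reduces to the free amalgamation of an auxiliary bipartite structure of points and blocks related by incidence.

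To pass from partial designs to designs, fix $\str A,\str B\in\overrightarrow{\mathcal D}_{kt\lambda}$ and a number of colors $r$, and take $\str C\in\overrightarrow{P\mathcal D}_{kt\lambda}$ with $\str C\to (\str B)^{\str A}_r$ as above. Saturate each $t$-set of $\str C$ by adding new blocks through it using fresh auxiliary points, obtaining $\str C'\in\overrightarrow{P\mathcal D}_{kt\lambda}$ in which every original $t$-set of $\str C$ is already covered by exactly $\lambda$ blocks, then apply Keevash's theorem to embed $\str C'$ into an ordered design $\str D\in\overrightarrow{\mathcal D}_{kt\lambda}$. By the exact-count axiom of a design, $\str D$ introduces no further block through any $t$-set of $\str C$, so the image of $\str C$ is induced in $\str D$: every block of $\str D$ lying inside it already comes from $\str C$. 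Any $r$-coloring of copies of $\str A$ in $\str D$ therefore restricts to a coloring of copies of $\str A$ inside the image of $\str C$, and the Ramsey property of $\str C$ produces a monochromatic copy of $\str B$, which is automatically a copy of $\str B$ in $\str D$ as designs. The principal obstacle in this plan is the amalgamation step for partial designs: the $\lambda$-bound is a genuinely global constraint that obstructs naive free amalgamation, and selecting the right closure---one that is locally finite and compatible with the linear order---is precisely what allows the general Ramsey theorem to be invoked. Keevash's theorem, although deep, enters only as a black box at the very end.
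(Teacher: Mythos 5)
Your overall architecture coincides with the paper's: prove a Ramsey theorem for the class $\overrightarrow{P\mathcal D}_{kt\lambda}$ of ordered partial designs via the general Hubi\v cka--Ne\v set\v ril theorem for classes with closures, then transfer it to $\overrightarrow{\mathcal D}_{kt\lambda}$ using Keevash's completion theorem. Your transfer step is correct, and in fact more explicit than the paper's one-line remark: because $\str{A}$ and $\str{B}$ are designs, every $t$-set inside a copy is already covered exactly $\lambda$ times, so neither the completion nor any other block of $\str{D}$ can meet such a copy in $t$ or more points; hence closed copies in $\str{C}$ remain induced copies in $\str{D}$ and the colour-restriction argument goes through (your preliminary saturation of $\str{C}$ is harmless but not strictly necessary for this).

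The gap lies in the key step, the Ramsey property of $\overrightarrow{P\mathcal D}_{kt\lambda}$, and specifically in your choice of closure. Marking which $t$-sets are \emph{saturated} is too weak to make amalgamation free once $\lambda\geq 3$. Concretely, take $k=3$, $t=2$, $\lambda=3$; let $\str{A}$ consist of two vertices $x,y$ and no blocks, let $\str{B}_1$ add vertices $a_1,a_2$ and blocks $\{x,y,a_1\},\{x,y,a_2\}$, and let $\str{B}_2$ add $b_1,b_2$ and blocks $\{x,y,b_1\},\{x,y,b_2\}$. No $t$-set is saturated in any of the three structures, so all markings agree and $\str{A}$ is a closed substructure of both $\str{B}_1$ and $\str{B}_2$ under your definition; yet in the free amalgam the pair $\{x,y\}$ lies in four blocks, violating the bound $\lambda=3$. (In general the marking only caps the count at $2\lambda-2$, which suffices for $\lambda\leq 2$ but not beyond; the theorem is claimed for all $\lambda$.) Re-encoding points and blocks as a bipartite incidence structure does not help: the same four blocks appear there and the $\lambda$-constraint is violated all the same. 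What is actually needed, and what the paper does, is a closure that swallows \emph{whole neighbourhoods}: the language carries partial function symbols $\func{}{\ell}$, $k\leq\ell\leq(k-t)\lambda+t$, of domain arity $t$, where $\func{A}{\ell}$ maps each $t$-tuple whose neighbourhood (the union of all blocks through it) has size $\ell$ to that neighbourhood. With this, any substructure containing a $t$-set that is covered at all must contain every block through it; so in any admissible amalgamation problem no block of $\str{B}_i$ outside $\str{A}$ passes through a $t$-set of $\str{A}$, and in the example above $\str{A}$ simply fails to be a substructure of $\str{B}_1$, so the problematic instance never arises. With that closure free amalgamation holds for all $\lambda$ and Theorem~\ref{thm:main} applies.
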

 For the proof we have to find the right degree of abstraction which will be introduced in the next three sub-sections together with all relevant notions.
Strong structural Ramsey theorem (proved in \cite{Hubicka2016,Evans3}) plays the key role.

\subsection{Designs}
A {\em $(k,t,\lambda)$-design} ($k\geq t,\lambda$ all positive integers) is a finite hypergraph $(X,\rel{}{})$ where $\rel{}{}$ is a set of $k$-subsets of $X$ with property that any $t$-subset of $X$ is contained in exactly $\lambda$ elements of $\rel{}{}$. More formally we have $\rel{}{}\subseteq {X\choose k}$ and $\lvert \left\{M\in \rel{}{}:T\subseteq M\right\}\rvert=\lambda$ for any $T\in {X\choose t}$ (as usual in Ramsey context we denote by $X\choose k$ the set of all $k$-subsets of $X$ consisting of $k$ elements). A {\em partial $(k,t,\lambda)$-design} is hypergraph $(X,\rel{}{})$ where every $t$-subset is in at most $\lambda$ elements of $\rel{}{}$.

Designs form a classical area of combinatorics as well as of mathematical statistics (design of experiments). %, see e.g.\cite{Jungnicek,wilsonI,wilsonII,wilsonIII}.
 Particularly Keevash~\cite{Keevash2014,Kalai2015}, extending another spectacular result in the area~\cite{wilsonI,wilsonII,wilsonIII}, recently showed the following:
%that there are infinitely many designs for each choice of parameters. He also showed that:
\begin{thm}[Keevash theorem~\cite{Keevash2014}]
\label{thm:completion}
 For every choice of parameters $k,t,\allowbreak \lambda$ there exists $(k,t,\lambda)$-design on every sufficiently large set satisfying a well known divisibility condition. Also any partial $(k,t,\lambda)$-design can be completed to a $(k,t,\lambda)$-design.
\end{thm}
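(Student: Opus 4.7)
\medskip

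\noindent\textbf{Proof plan.} Keevash's theorem is a deep result; the plan below follows the strategy of his original proof, which combines an algebraic template, the R\"odl nibble, and an absorption technique. The approach simultaneously yields the existence statement and the completion statement: the latter follows by treating the given partial design as part of the initial configuration. Throughout, $X$ is sufficiently large and the standard divisibility conditions are assumed, namely that for each $0\le i\le t$ the quantity $\lambda\binom{|X|-i}{t-i}/\binom{k-i}{t-i}$ is a non-negative integer.

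The construction proceeds in three phases. First, I would set aside a small ``absorber'' family $A\subseteq\binom{X}{k}$ with the following property: for every sufficiently small partial $(k,t,\lambda)$-design $L$ on $X$ satisfying the expected local divisibility, some $A'\subseteq A$ can be swapped so that the blocks already placed, together with $(A\setminus A')\cup L$, form a valid $(k,t,\lambda)$-design. Second, on the complement of $A$ I would run a R\"odl-style nibble that iteratively selects random $k$-sets, retaining a block whenever it is compatible with the current partial design; standard concentration inequalities (Azuma/Talagrand) applied to the auxiliary ``conflict'' hypergraph show that after many rounds almost every $t$-subset is covered exactly $\lambda$ times, leaving only a tiny leftover. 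Third, I would feed this leftover into the absorber via the swap mechanism of Phase~1, finalizing the design. For the completion statement, one applies exactly the same recipe, with the given partial design prescribed as part of the initial configuration.

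The main obstacle is Phase~1: constructing an absorber that is simultaneously robust (able to swallow any small, divisibility-compatible leftover) and small (so as not to disrupt the nibble analysis). Keevash's innovation is a ``randomised algebraic construction'' built from an algebraic template over a finite field, augmented by a rich family of local ``swap'' moves (octahedra and generalised octahedra) whose linear combinations span all divisibility-compatible modifications. Verifying that these moves can be combined to realise arbitrary legal modifications of the leftover, and controlling the many error terms that arise from the interaction between the algebraic template, the random selection, and the absorbing swaps, is what occupies the bulk of his argument and would be the principal work in any honest execution of this plan.
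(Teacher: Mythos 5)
This statement is not proved in the paper at all: it is Keevash's theorem, quoted from~\cite{Keevash2014} and used as a black box (the paper's only ``proof'' is the citation), so there is no internal argument to compare yours against. What you have written is a reasonable high-level summary of Keevash's actual strategy --- the randomised algebraic construction combining a template, a R\"odl-nibble phase, and absorption --- but it is a proof \emph{plan}, not a proof. Every load-bearing step (construction of the algebraic template over a finite field, the spanning property of the octahedral swap moves, the concentration analysis of the nibble, and the interaction of all three) is named and then explicitly deferred; your own closing sentence concedes that this constitutes ``the principal work.'' As a submission for this statement, that is a gap of the most basic kind: nothing is actually established.

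There is also a concrete mathematical flaw in the one place where you do commit to an argument, namely the completion statement. You claim it follows ``by exactly the same recipe, with the given partial design prescribed as part of the initial configuration.'' This does not work as stated: an arbitrary partial $(k,t,\lambda)$-design on $X$ can be dense --- it may already cover many $t$-sets the full $\lambda$ times and do so very unevenly --- which destroys the quasirandomness and boundedness hypotheses that both the nibble analysis and the absorber construction require. Keevash's completion/embedding result is instead obtained by embedding the given partial design into a \emph{much larger} vertex set (chosen to satisfy the divisibility conditions), where the prescribed blocks become sparse enough to be treated as a negligible perturbation; the enlargement of the ground set is essential, not optional. Note that the paper's own formulation of the completion claim is silent on this point as well, but any honest proof must address it.
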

\subsection{Models}

Let $L=L_\mathcal R\cup L_\mathcal F$ be a language involving relational symbols $\rel{}{}\in L_\mathcal R$ and function symbols $F\in L_\mathcal F$ each having associated positive integers called {\em arity} and denoted by $\arity{}$ for relations and {\em domain arity}, $d(\func{}{})$, {\em range arity}, $r(\func{}{})$, for functions.  
An \emph{$L$-structure} $\str{A}$ is a structure with {\em vertex set} $A$, functions $\func{A}{}:\dom(\func{A}{})\to {A\choose {r(\func{}{})}}$, $\dom(\func{A}{})\subseteq A^{d(\func{}{})}$ for $\func{}{}\in L_\mathcal F$ and relations $\rel{A}{}\subseteq A^{\arity{}}$ for $\rel{}{}\in L_\mathcal R$.
$\dom(\func{A}{})$ is called the {\em domain} of function $\func{}{}$  in $\str{A}$.
Notice that the domain is set of ordered $d(\func{}{})$-tuples while the range is set of unordered $r(\func{}{})$-tuples.
Symmetry in ranges permits explicit description of algebraic closures in the \Fraisse{} limits without changing the automorphism group (c.f.~\cite{Evans2}). It also simplifies some of the notation bellow.

The language $L$ is usually fixed and understood from the context.  If set $A$ is finite we call \emph{$\str A$ finite structure} (in most of this paper all structures are finite). 
If language $L$ contains no function symbols, we call $L$ {\em relational language} and every $L$-structure is also called {\em relational $L$-structure}.

The notion of embeddings, isomorphism, homomorphisms and free amalgamation 
are natural generalisation of the corresponding notions on relational structures
and are formally introduced in Section~\ref{sec:background}. Considering function
symbols has important consequences to what we consider as a substructure:
An $L$-structure $\str{A}$  is a {\em substructure} of $\str{B}$ if $A\subseteq B$ and all relations and functions of $\str{B}$ restricted to $A$
are precisely relations and functions of $A$. In particular a $t$-tuple
$\vec{t}$ of vertices of $A$ is in $\dom(\func{A}{})$ if and only if it is also in
$\dom(\func{B}{})$ and $\func{A}{}(\vec{t})=\func{B}{}(\vec{t})$. 
This implies the fact that $\str{B}$ does not induce a substructure on every subset of $B$ (but only on ``closed'' sets, to be defined later).

In our setting $(k,t,\lambda)$-design corresponds to a particular $L$-structure with $L_\mathcal F=\emptyset$, $L_\mathcal R=\{\rel{}{}\}$, $a(\rel{}{})=k$. However designs satisfy some furhter properties and it is essential for our argument that we introduce and use function symbols.

\subsection{Ramsey classes}

For structures $\str{A},\str{B}$ denote by ${\str{B}\choose \str{A}}$ the set of all sub-structures of $\str{B}$, which are isomorphic to $\str{A}$.  Using this notation the definition of a Ramsey class gets the following form:
A class $\mathcal C$ is a \emph{Ramsey class} if for every two objects $\str{A}$ and $\str{B}$ in $\mathcal C$ and for every positive integer $k$ there exists a structure $\str{C}$ in $\mathcal C$ such that the following holds: For every partition ${\str{C}\choose \str{A}}$ into $k$ classes there exists an $\widetilde{\str B} \in {\str{C}\choose \str{B}}$ such that ${\widetilde{\str{B}}\choose \str{A}}$ belongs to one class of the partition.  It is usual to shorten the last part of the definition to $\str{C} \longrightarrow (\str{B})^{\str{A}}_k$.

We are motivated by the following, now classical, result.
\begin{thm}[Ne\v set\v ril-R\"odl theorem~\cite{Nevsetvril1976}]
\label{thm:NRoriginal}
Let $\str{A}$ and $\str{B}$ be a linearly ordered hypergraphs, then there exists a linearly ordered hypergraph
$\str{C}$ such that $\str{C}\longrightarrow (\str{B})^\str{A}_2$.

Moreover, if $\str{A}$ and $\str{B}$ do not contain an irreducible hypergraph
$\str{F}$  then $\str{C}$ may be chosen
with the same property (a hypergraph $F$ is irreducible if every pair of its elements is contained in an edge of $F$). 
\end{thm}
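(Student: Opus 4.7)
The plan is to follow the partite construction method of Nešetřil and Rödl. Set $a=|A|$ and $b=|B|$, and introduce the auxiliary class of \emph{$b$-partite ordered hypergraphs}: ordered hypergraphs whose vertex set is partitioned into $b$ parts $X_1,\dots,X_b$ (inheriting the linear order from the order on the parts and inside each part) and whose edges are transversals, meaning each edge meets every part in at most one vertex. Any linearly ordered copy of $\str{B}$ inside a larger ordered hypergraph corresponds naturally, after placing the $i$-th vertex of the copy into $X_i$, to a transversal in the partite picture. This auxiliary setting is natural because transversality lets us attach many copies of $\str{B}$ without accidentally creating small irreducible configurations across copies.

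The first step is a \emph{partite Ramsey lemma}: for every $b$-partite ordered hypergraph $\str{P}$ and every number of colours $k$, there exists a $b$-partite ordered hypergraph $\str{Q}$ such that every $k$-colouring of the transversal copies of $\str{A}$ in $\str{Q}$ admits a monochromatic transversal copy of $\str{P}$. I would prove this by induction on $|\binom{\str{P}}{\str{A}}|$, handling one transversal copy of $\str{A}$ at a time by replacing some part of the current partite structure by a sufficiently large product and applying a pigeonhole / Hales--Jewett-flavoured step; the base case is an ordinary finite pigeonhole on a blown-up part.

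Given the partite lemma, the global Ramsey object $\str{C}$ is built by an iterative amalgamation. Choose a $b$-partite picture $\str{P}_0$ already containing many transversal copies of $\str{B}$, enumerate the transversal copies of $\str{A}$ in $\str{P}_0$ as $\str{A}_1,\dots,\str{A}_N$, and construct $\str{P}_1,\dots,\str{P}_N$ so that each $\str{P}_i$ is a $b$-partite amalgam of many copies of $\str{P}_{i-1}$ glued freely along the partite image of $\str{A}_i$, produced by applying the partite lemma to $\str{P}_{i-1}$. Finally, flatten the parts of $\str{P}_N$ to a single linear order to obtain an ordered hypergraph $\str{C}$. A standard backward sweep then verifies $\str{C}\longrightarrow (\str{B})^{\str{A}}_2$: any $2$-colouring of $\binom{\str{C}}{\str{A}}$ restricts to the last amalgamation step, the partite Ramsey property supplies a monochromatic copy of $\str{P}_{N-1}$, and iterating this argument down through the construction yields a monochromatic transversal copy of $\str{B}$, which is exactly a copy of $\str{B}$ in $\str{C}$.

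The ``moreover'' clause is the delicate part and is where I expect the main difficulty. Irreducibility of $\str{F}$ forces every pair of vertices of any embedded copy of $\str{F}$ to lie in a common edge. Because each amalgamation step is free over the transversal image of some $\str{A}_i$, any new pair of vertices spanning the amalgam lacks a common edge, so no new irreducible subhypergraph can arise that is not already contained in one of the two amalgamated factors. The induction therefore propagates the ``no $\str{F}$'' condition from $\str{A}$ and $\str{B}$ through $\str{P}_0,\dots,\str{P}_N$ to $\str{C}$, provided the partite Ramsey lemma itself is realised by a construction omitting $\str{F}$. The real obstacle is to arrange this in the proof of the partite lemma: the blow-up/product step must be carried out by a free amalgamation over the shared transversal parts rather than by adding arbitrary new edges, so that irreducible subhypergraphs across the product simply cannot appear.
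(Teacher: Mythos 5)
A preliminary remark: the paper never proves Theorem~\ref{thm:NRoriginal} --- it is quoted as classical background with a citation to~\cite{Nevsetvril1976}, and the paper's own machinery is the more general Theorem~\ref{thm:main}, imported from elsewhere. So your proposal can only be judged against the classical partite-construction proof, which is indeed the strategy you chose; but as written it has a genuine gap, located exactly at your ``partite Ramsey lemma''. The lemma you formulate --- for every $b$-partite $\str{P}$ there is a $b$-partite $\str{Q}$ such that any colouring of the \emph{transversal} copies of $\str{A}$ (copies occupying only $a$ of the $b$ parts) yields a copy of $\str{P}$ all of whose transversal copies of $\str{A}$ are monochromatic --- is not a lemma: it is essentially the full theorem. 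If it held, a single application with $\str{P}$ a transversal placement of $\str{B}$ would finish the proof, and your entire iteration over $\str{A}_1,\dots,\str{A}_N$ would be redundant; that redundancy is the symptom of the problem. The genuine Ne\v set\v ril--R\"odl partite lemma is much weaker: it concerns an $a$-partite $\str{A}$ inside $a$-partite systems, so that every partite copy of $\str{A}$ has \emph{full} trace (exactly one vertex in each part), and only in that setting does the Hales--Jewett argument (alphabet $=$ copies of $\str{A}$ in $\str{P}$, combinatorial lines giving copies of $\str{P}$) work. Your proposed proof --- handling one transversal copy of $\str{A}$ at a time by a pigeonhole on a blown-up part --- does not substantiate the stronger statement, because the whole difficulty of the theorem is reconciling colours of copies with \emph{different} traces, and a per-copy induction has no control over the new copies created at every step.

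The second, related omission is the classical Ramsey theorem for $a$-element sets. In the correct partite construction one takes $n$ parts with $n\longrightarrow(b)^a_k$ (not $b$ parts), enumerates the \emph{traces} (the $a$-element subsets of the part index set) rather than the copies of $\str{A}$, and builds $\str{P}_i$ by applying the full-trace partite lemma to the subsystem of $\str{P}_{i-1}$ sitting on the $i$-th trace, then freely extending each of its copies back to a copy of $\str{P}_{i-1}$. The backward sweep then produces a copy of $\str{P}_0$ in which the colour of a copy of $\str{A}$ depends only on its trace; this induces a colouring of the $a$-subsets of $[n]$, the set Ramsey theorem gives $b$ parts all of whose $a$-subsets share one colour, and the copy of $\str{B}$ placed transversally on those parts is monochromatic. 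In your $b$-partite version the sweep ends with finitely many copies of $\str{A}$ each frozen to its own colour and no tool left to extract a monochromatic $\str{B}$. Your treatment of the ``moreover'' clause --- an irreducible $\str{F}$ cannot straddle a free amalgam, since a pair of vertices from opposite sides lies in no common edge --- is the right idea and is indeed how $\str{F}$-freeness is propagated through both the pictures and the partite lemma, but it only becomes meaningful once the two gaps above are repaired.
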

Given language $L$, denote by $\overrightarrow{L}$ language $L$ extended by one
binary relation $\leq$.  Given $L$-structure $\str{A}$ the {\em ordering of $\str{A}$}
is $\overrightarrow{L}$-structure extending $\str{A}$
by arbitrary linear ordering of vertices represented by $\leq_\str{A}$.
We denote such ordered $\str{A}$ as $\overrightarrow{\str{A}}$.
Given class $\K$ of $L$-structures denote by $\overrightarrow{\K}$ the class of
all orderings of structures in $\K$ i.e. $\overrightarrow{\K}$ is a class of $\overrightarrow{L}$-structures where $\leq$ is a linear order.
We sometimes say that $\overrightarrow{\K}$ arises by the {\em free orderings} of structures in $\mathcal K$.
For purposes of this paper Theorem~\ref{thm:NRoriginal} can now be re-formulated using notions of \Fraisse{} theory (which will be briefly introduced in Section~\ref{sec:background})
as follows:
\begin{thm}[Ramsey theorem for free amalgamation classes]
\label{thm:NR}
Let $L$ be a relational language,
$\K$ be a free amalgamation class of relational $L$-structures.  Then $\overrightarrow{\K}$ is a Ramsey class.
\end{thm}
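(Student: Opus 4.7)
The plan is to reduce Theorem~\ref{thm:NR} to the ``moreover'' clause of the Ne\v set\v ril-R\"odl theorem (Theorem~\ref{thm:NRoriginal}). First I would note that although Theorem~\ref{thm:NRoriginal} is phrased for single-sorted hypergraphs, its partite-construction proof works verbatim for a relational language $L$ with several relation symbols of various arities (each relation becomes a separate ``color class'' of tuples). So I will freely use the theorem in this multi-sorted form for the language $L$ at hand.

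The heart of the argument is the following characterization of free amalgamation classes of relational structures: \emph{a hereditary class $\K$ of finite $L$-structures is a free amalgamation class if and only if there is a family $\F$ of finite irreducible $L$-structures such that $\K=\mathrm{Forb}(\F)$}, where $\str{F}$ is \emph{irreducible} when every pair of its vertices is contained in a common tuple of some relation $\rel{}{}\in L$. To prove this, I would take $\F$ to be the family of all finite irreducible structures outside $\K$. The inclusion $\K\subseteq\mathrm{Forb}(\F)$ is immediate from hereditariness. For the converse, pick $\str{A}\notin\K$ of minimum cardinality; if $\str{A}$ were not irreducible, there would exist vertices $u,v\in A$ not lying in any common tuple, and this would split $\str{A}$ into two proper induced substructures $\str{A}_1,\str{A}_2$ with $u\notin A_2$, $v\notin A_1$, and $A=A_1\cup A_2$, such that no tuple of any relation of $\str{A}$ straddles the boundary. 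Hence $\str{A}$ is the free amalgam of $\str{A}_1$ and $\str{A}_2$ over $\str{A}_1\cap\str{A}_2$; by the minimality of $\str{A}$ both factors lie in $\K$, and since $\K$ is a free amalgamation class so does their amalgam, contradicting $\str{A}\notin\K$. Thus $\str{A}$ is irreducible and so $\str{A}\in\F$.

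With this characterization in hand, the proof finishes by applying the ``moreover'' part of Theorem~\ref{thm:NRoriginal} (in its multi-sorted form). Given $\overrightarrow{\str{A}},\overrightarrow{\str{B}}\in\overrightarrow{\K}$ and a number of colors, the theorem produces an ordered hypergraph $\overrightarrow{\str{C}}$ with $\overrightarrow{\str{C}}\longrightarrow(\overrightarrow{\str{B}})^{\overrightarrow{\str{A}}}_k$ that additionally omits every irreducible structure from $\F$ (those that embed into $\overrightarrow{\str{B}}$ are already avoided, and only finitely many members of $\F$ can appear in $\overrightarrow{\str{C}}$ for the given parameters, so the extension works one forbidden pattern at a time). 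By the characterization $\overrightarrow{\str{C}}\in\overrightarrow{\K}$, establishing the Ramsey property.

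The main obstacle is the decomposition step in the characterization: one has to be careful that the separation of a non-irreducible structure into $\str{A}_1$ and $\str{A}_2$ really realizes $\str{A}$ as a free amalgam over $\str{A}_1\cap\str{A}_2$, which hinges on the precise definition of ``free amalgamation'' for arbitrary relational languages -- namely that no tuple of any $\rel{A}{}$ has vertices on both sides of the separation. A secondary technical point is that $\F$ may be infinite, so one must either allow infinite forbidden families in Theorem~\ref{thm:NRoriginal} or observe that for a given $\overrightarrow{\str{B}}$ only forbidden substructures of size at most $|B|$ can possibly arise, reducing to the finite case.
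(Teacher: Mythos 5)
Your proposal is correct and follows essentially the route the paper itself takes: the paper presents Theorem~\ref{thm:NR} as a direct reformulation of Theorem~\ref{thm:NRoriginal}, and your lemma identifying free amalgamation classes of relational structures with classes $\mathrm{Forb}(\F)$ for a family $\F$ of irreducible structures, combined with the ``moreover'' clause, is precisely the bridging argument the paper leaves implicit. The one detail to state more carefully is the passage from a single forbidden $\str{F}$ to the whole family: iterating the single-structure statement ``one forbidden pattern at a time'' does not compose (each application only protects against the one structure it was applied to), so one should instead invoke the family form of the Ne\v set\v ril--R\"odl theorem, in which the partite construction yields $\str{C}$ all of whose irreducible substructures embed into $\str{B}$; this handles all of $\F$, finite or infinite, simultaneously and is what the paper implicitly relies on.
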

Connection of Ramsey classes and extremely amenable
groups~\cite{Kechris2005} motivated a systematic search for new examples of Ramsey classes. It
became apparent that it is important to consider structures with both relations
and functions or, equivalently, classes of structures with ``strong embeddings''.
  This led to \cite{Hubicka2016} which provides a sufficient structural
condition for a subclass of a Ramsey class to be Ramsey and
generalises this approach also to classes with function symbols representing closures. 
Comparing the two main results of~\cite{Hubicka2016} (Theorem~2.1 for classes without closures and Theorem 2.2 for classes with closures) it is clear
that considering closures leads to many technical difficulties. In fact,
a recent example given in~\cite{Evans2} shows that there is no  direct analogy of Theorem~\ref{thm:NR}
to free amalgamation classes with closures.
Perhaps surprisingly, one can prove that if closures are explicitly represented by means
of partial functions, such statement is true. More precisely the following we proved in~\cite[Theorem 1.3]{Evans3} as a more streamlined version of Theorem 2.2 of~\cite{Hubicka2016}:
%We prove:
\begin{thm}
\label{thm:main}
Let $L$ be a language (involving relational symbols and partial functions),
$\K$ be a free amalgamation class of $L$-structures. 
 Then $\overrightarrow{\K}$ is a Ramsey class.
\end{thm}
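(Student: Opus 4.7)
The strategy is to reduce Theorem \ref{thm:main} to its known relational prototype Theorem \ref{thm:NR} and then deal with the extra rigidity coming from partial functions via the Nešetřil--Rödl partite construction carried out inside $\K$.

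First, I would pass to a purely relational language $L^-$ by turning each partial function symbol $F$ of domain arity $d$ and range arity $r$ into a relation $R_F$ of arity $d+r$, symmetric in the last $r$ coordinates, whose intended interpretation is that $R_F(\vec x;\vec y)$ holds iff $F(\vec x)=\{y_1,\dots,y_r\}$. The relational reduct $\K^-$ of $\K$ inherits free amalgamation, because free amalgams in $L$ project onto the relational free amalgams of the reducts, so Theorem \ref{thm:NR} gives that $\overrightarrow{\K^-}$ is a Ramsey class. The main technical difficulty is that substructures of $L$-structures in $\K$ are required to be \emph{closed} under the partial functions, while substructures in $\K^-$ are arbitrary subsets. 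A relational Ramsey object for $\str{A}^-,\str{B}^-$ therefore contains many ``spurious'' copies of $\str{A}^-$ that do not lift to closed embeddings of $\str{A}$, and the monochromatic $\str{B}^-$ it produces need not itself be closed. So one cannot invoke Theorem \ref{thm:NR} as a black box.

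To repair this, I would run the Nešetřil--Rödl partite construction inside $\K$: enumerate the closed copies $\str{A}_1,\dots,\str{A}_N$ of $\str{A}$ in $\str{B}$ and iterate a partite lemma $N$ times, maintaining at each step an $N$-partite $L$-structure whose transversal copies of $\str{B}$ are built as free amalgams of smaller $\K$-pieces over their common substructures. Free amalgamation of $\K$ ensures that all intermediate structures stay in $\K$ and can be freely ordered, yielding a final $\str{C}\in\overrightarrow{\K}$. The standard partite-to-Ramsey reduction then reduces the whole theorem to one application of the partite lemma for the fixed index $i$ being processed.

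The heart of the argument, and the step I expect to be the main obstacle, is the partite lemma itself: given an $N$-partite $\K$-structure $\str{P}$ and a colouring of its transversal $\str{A}_i$-copies, construct a $\K$-extension $\str{P}'$ containing a transversal $\str{B}$-copy all of whose $\str{A}_i$-copies are monochromatic. The relational version of this partite lemma is available from Theorem \ref{thm:NR} applied to the $L^-$-reduct of $\str{P}$, and the real issue is to pull the resulting extension back to $L$ so that no closure is accidentally forced between the new transversal and the rest of $\str{P}'$. Here free amalgamation is decisive: by defining the partial functions on $\str{P}'$ only where they are already defined in one of the amalgamated pieces, one avoids creating any unintended closed $\str{A}_i$-copies crossing the transversal, preserves the monochromaticity of the given colouring on the lift, and closes the induction. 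This control over closures is exactly what fails for general free amalgamation classes with implicit closures (cf.\ the example in \cite{Evans2}) and what motivates representing closures by the explicit partial function symbols of $L_\mathcal{F}$ in the first place.
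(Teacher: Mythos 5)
You should first be aware that the paper does not prove Theorem~\ref{thm:main} at all: it imports it from \cite[Theorem 1.3]{Evans3}, a streamlined form of Theorem~2.2 of \cite{Hubicka2016}, whose proof is an iterated partite construction with a partite lemma proved directly from the Hales--Jewett theorem. Your outline heads in that same general direction, but it rests on a claim that is false. You assert that the relational reduct class $\K^-$ (each function $\func{}{}$ replaced by a relation of arity $d(\func{}{})+r(\func{}{})$) ``inherits free amalgamation'', and you invoke Theorem~\ref{thm:NR} for it --- both at the outset and again inside your partite lemma. Reducts inherit free amalgamation only over \emph{closed} substructures, whereas a free amalgamation class must amalgamate over arbitrary substructures, and there the claim breaks. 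Concretely, take $\K=P\mathcal D_{321}$ (partial Steiner triple systems with their closure function), let $\str{B}_1$ be a single block $\{a,b,c\}$, $\str{B}_2$ a single block $\{a,b,d\}$, and let $\str{A}^-$ be the bare two-element structure on $\{a,b\}$: this is a legitimate relational substructure of both reducts (and itself the reduct of a member of $\K$), but the free amalgam of $\str{B}_1^-$ and $\str{B}_2^-$ over it puts the pair $\{a,b\}$ into two blocks and creates two $R_F$-tuples with the same domain part. No member of $\K$ has this as a substructure of its reduct, so $\K^-$ is not a free amalgamation class and Theorem~\ref{thm:NR} simply does not apply to it. (In the $L$-world this amalgamation is never required, because $\{a,b\}$ is not closed in $\str{B}_1$; that is exactly what the function symbols are for.)

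The repair you propose for the pull-back --- ``define the partial functions on $\str{P}'$ only where they are already defined in one of the amalgamated pieces'' --- cannot close this gap, because the obstruction is not the accidental creation of closed copies: the relational construction produces configurations that violate the constraints defining $\K$ and that admit \emph{no} completion whatsoever. Once a $t$-set lies in $\lambda+1$ blocks, or $R_F$ holds for two different range sets over one domain tuple, no choice of where to define the functions returns the structure to $\K$. Moreover, in the output of a relational partite lemma the intersection of two transversal copies of $\str{B}$ need not be a closed set, so the gluings performed in the next round of your induction are not amalgamations over $L$-substructures at all, and your claim that ``free amalgamation of $\K$ ensures that all intermediate structures stay in $\K$'' is unsupported precisely where it is needed. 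This is the technical core of \cite{Hubicka2016}: the partite lemma there is proved by Hales--Jewett so that the copies of $\str{B}$ form combinatorial lines whose pairwise intersections and closures are explicitly controlled, and the partite construction is arranged so that every gluing is a free amalgamation over a closed substructure. You correctly identify the partite lemma as ``the step I expect to be the main obstacle'', but you then assume it in a form (extracted from Theorem~\ref{thm:NR} applied to reducts) that is unavailable; that assumption is essentially the theorem to be proved.
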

It appears (see~\cite{Evans3}) that many natural classes may be interpreted as free amalgamation classes and consequently Theorem~\ref{thm:main} yields uniform proofs of Ramsey property of some recently discovered Ramsey classes (such as ordered partial Steiner systems (i.e. $(k,t,1)$-designs)~\cite{bhat2016ramsey}, bowtie-free graphs~\cite{Hubivcka2014}, bouquet-free graphs~\cite{Cherlin2007} and a Ramsey expansion
of class $2$-orientations of Hrushovski predimension construction~\cite{Evans2}). In this paper we add to this list of applications of Theorem~\ref{thm:main} yet another example from a very different area.
 %which was one of main motivations for this paper.

\section{Preliminaries}
\label{sec:background}
We now review some standard model-theoretic notions (see e.g.~\cite{Hodges1993}).

An \emph{embedding} $f:\str{A}\to \str{B}$ is an injective mapping $f:A\to B$ satisfying for every $\rel{}{}\in L_\mathcal R$ and $\func{}{}\in L_\mathcal F$:
\begin{enumerate}
\item $(x_1,x_2,\ldots, x_{\arity{}})\in \rel{A}{}\iff (f(x_1),f(x_2),\ldots,f(x_{\arity{}}))\in \rel{B}{}$, and,
\item $(x_1,\ldots, x_{d(\func{}{})})\in\dom(\func{A}{}) \iff (f(x_1),\ldots,f(x_{d(\func{}{})}))\in \dom(\func{B}{})$
 and\\ $f(\func{A}{}(x_1,c_2,\allowbreak \ldots, x_{d(\func{}{})}))=\func{B}{}(f(x_1),f(x_2),\ldots,f(x_{d(\func{}{})})).$
\end{enumerate}

  If $f$ is an embedding which is an inclusion then $\str{A}$ is a \emph{substructure} of $\str{B}$. For an embedding $f:\str{A}\to \str{B}$ we say that $\str{A}$ is \emph{isomorphic} to $f(\str{A})$ and $f(\str{A})$ is also called a \emph{copy} of $\str{A}$ in $\str{B}$. Thus $\str{B}\choose \str{A}$ is defined as the set of all copies of $\str{A}$ in $\str{B}$.
Given $\str{A}\in \K$ and $B\subset A$, the {\em closure of $B$ in $\str{A}$} is the smallest substructure of $\str{A}$ containing $B$.

Let $\str{A}$, $\str{B}_1$ and $\str{B}_2$ be structures with $\alpha_1$ an embedding of $\str{A}$
into $\str{B}_1$ and $\alpha_2$ an embedding of $\str{A}$ into $\str{B}_2$, then
every structure $\str{C}$
 together with embeddings $\beta_1:\str{B}_1 \to \str{C}$ and
$\beta_2:\str{B}_2\to\str{C}$ satisfying $\beta_1\circ\alpha_1 =
\beta_2\circ\alpha_2$ is called an \emph{amalgamation of $\str{B}_1$ and $\str{B}_2$ over $\str{A}$ with respect to $\alpha_1$ and $\alpha_2$}. 
We will call $\str{C}$ simply an \emph{amalgamation} of $\str{B}_1$ and $\str{B}_2$ over $\str{A}$
(as in the most cases $\alpha_1$, $\alpha_2$ and $\beta_1$, $\beta_2$ can be chosen to be inclusion embeddings).

Amalgamation is \emph{free} if $\beta_1(x_1)=\beta_2(x_2)$ if and only if $x_1\in \alpha_1(A)$ and $x_2\in \alpha_2(A)$
and there are no tuples in any relations  of $\str{C}$ and $\dom(\func{C}{})$, $\func{}{}\in L_\mathcal F$, using both vertices of
$\beta_1(B_1\setminus \alpha_1(A))$ and $\beta_2(B_2\setminus \alpha_2(A))$.
An \emph{amalgamation class} is a class $\K$ of finite structures satisfying the following three conditions:
\begin{enumerate}
\item {\em Hereditary property:} For every $\str{A}\in \K$ and a substructure $\str{B}$ of $\str{A}$ we have $\str{B}\in \K$;
\item {\em Joint embedding property:} For every $\str{A}, \str{B}\in \K$ there exists $\str{C}\in \K$ such that $\str{C}$ contains both $\str{A}$ and $\str{B}$ as substructures;
\item {\em Amalgamation property:} 
For $\str{A},\str{B}_1,\str{B}_2\in \K$ and $\alpha_1$ embedding of $\str{A}$ into $\str{B}_1$, $\alpha_2$ embedding of $\str{A}$ into $\str{B}_2$, there is $\str{C}\in \K$ which is an amalgamation of $\str{B}_1$ and $\str{B}_2$ over $\str{A}$ with respect to $\alpha_1$ and $\alpha_2$.
\end{enumerate}
If the $\str{C}$ in the amalgamation property can always be chosen as the free amalgamation, then $\K$ is {\em free amalgamation class}.

This explains all the notions in our Theorem~\ref{thm:main}. In the next section we apply this result to designs.

\section{Main result}
To deal with partial $(k,t,\lambda)$-designs in order to apply Theorem~\ref{thm:main} we need a particular encoding. Our language is denoted by $L=(L_\mathcal R,L_\mathcal F)$. The relational part $L_\mathcal R$ consists from relational symbol $\rel{}{}$ of arity $k$. We put $K=(k-t)\lambda+t$. The functional language $L_\mathcal F$ consists from symbol $\func{}{k},\func{}{k+1},\ldots,\func{}{K}$ all with domain arity $d(\func{}{\ell})=t$ and range arity $r(\func{}{\ell})=\ell$, $\ell=k,k+1,\ldots, K$.

 Denote by $\Str(L)$ the class of all $L$-structures (i.e. models of the language $L$). Within this class $\Str(L)$ we define a subclass $P\mathcal D_{kt\lambda}$ of all structures $\str{A}=(A,\rel{A}{},(\func{A}{\ell}: \ell=k,k+1,\ldots,K))$ which satisfy
\begin{enumerate}
  \item for every $(v_1,v_2,\ldots, v_k)\in \rel{A}{}$ it holds that $v_i\neq v_j$, $1\leq i<j\leq k$;
  \item if $(v_1,v_2,\ldots, v_k)\in \rel{A}{}$ then $(v_{\pi(1)},v_{\pi(2)},\ldots, v_{\pi(k)})\in \rel{A}{}$ for any permutation $\pi$;
  \item $(A,\widetilde{R}_\str{A})$ is partial $(k,t,\lambda)$-design where $\widetilde{R}_\str{A}=\{\{v_1,v_2,\ldots, v_k\}:(v_1,v_2,\allowbreak \ldots,\allowbreak v_k)\}\in \rel{A}{}$;
  \item $(v_1,v_2,\ldots, v_t)\in \dom(\func{A}{\ell})$ if and only if $\lvert N_\str{A}(v_1,v_2,\ldots, v_t)\rvert=\ell$ and $\func{A}{\ell}(v_1,v_2,\ldots,v_t)=N_\str{A}(v_1,v_2,\ldots, v_t)$ where $N_\str{A}(v_1,v_2,\ldots, v_t)$ is the {\em neighbourhood} of set $\{v_1,v_2,\ldots, v_t\}$ --- that is the set of all vertices $v$ such that there exists $M\in \widetilde{R}_\str{A}$ containing each of vertices $v,v_1,v_2,\ldots, v_t$.
\end{enumerate}
The embeddings in $P\mathcal D_{kt\lambda}$ are inherited from $\Str(L)$. Let us explicitly formulate their form:

For $\str{A}=(A,\rel{A}{},(\func{A}{\ell}: \ell=k,k+1,\ldots,K))$, $\str{B}=(B,\rel{B}{},(\func{B}{\ell}: \ell=k,k+1,\ldots,K))$ injective mapping $f:A\to B$ is an embedding of $\str{A}$ into $\str{B}$ if 
it satisfies the following conditions:
\begin{enumerate}
\item $(x_1,x_2,\ldots,x_k)\in \rel{A}{}$ if and only if $(f(x_1),f(x_2),\ldots, f(x_k))\in \rel{B}{}$ (i.e. $f$ is embedding $(A,\rel{A}{})$ into $(B,\rel{B}{})$),
\item for every $\ell$, $k\leq \ell\leq K$, it satisfies $\{f(x):x\in \func{A}{\ell}(x_1,x_2,\ldots, x_t)\}=\func{B}{\ell}(\{f(x_1),f(x_2),\allowbreak \ldots,\allowbreak  f(x_t)\})$ whenever one side of this equation make sense.
\end{enumerate}
 Every ordered partial $(k,t,\lambda)$-design $(X,\rel{}{},\leq)$ may be interpreted in $\overrightarrow{P\mathcal D}_{kt\lambda}$ (the class of free orderings of $P\mathcal D_{kt\lambda}$) as the following $L$-structure:
$\str{A}=(A,\rel{A}{},\leq_\str{A},(F^l_\str{A}:k\leq l\leq k))$ where:
\begin{enumerate}
 \item $A=X$,
 \item $\rel{A}{}=\{(v_1,v_2,\ldots, v_k):\{v_1,v_2,\ldots v_k\}\in R\hbox{ and }|\{v_1,v_2,\ldots, v_k\}|=k\}$,
 \item $\leq_\str{A}=\leq$.
 \item $F^\ell_\str{A}(\vec{t})$ is defined for every $t$-tuple $\vec{t}=(t_1,t_2,\ldots,t_t)$ without repeated vertices whenever $\lvert \bigcup \{M:T\subseteq M\in R\}\rvert=\ell$ and in this case $F^\ell_\str{A}(\vec{t})=\bigcup \{M:T\subseteq M\in R\}\rvert=N_\str{A}(\vec{t})$ where $T=\{t_1,t_2,\ldots ,t_t\}$.
\end{enumerate}

 Clearly $\str{A}\in \overrightarrow{P\mathcal D}_{kt\lambda}$ as it satisfies the above 4 conditions defining the class $P\mathcal D_{kt\lambda}$ and $\leq_\str{A}$ is a linear order. Note also that this correspondence is 1--to--1 as every $\str{A}\in \overrightarrow{P\mathcal D}_{kt\lambda}$ leads to an ordered partial $(k,t,\lambda)$-design $(A,\widetilde{R}_\str{A},\leq_\str{A})$.

The embeddings in $P\mathcal D_{kt\lambda}$ have the following meaning in the class of designs: $f:\str{A}\to \str{B}$ is an embedding if it satisfies (i), (ii) and the following:
\begin{enumerate}
\item[(iii')] Every $M\in \widetilde{R}_\str{B}\setminus f(\widetilde{R}_\str{A})$ intersects the set $f(\str{A})$ in at most $t-1$ elements (of course we have $f(A)=\{f(a):a\in A\}$ and $f(\widetilde{R}_\str{A})=\{f(M):M\in \widetilde{R}_\str{A}\}$).
\end{enumerate}
The equivalence (iii) and (iii') follows from our assumptions in definition of $P\mathcal D_{kt\lambda}$. In this case we the set $f(A)$ is closed in $\str{B}$. (Note that the condition (iii') is vacuous if $\str{A}$, $\str{B}$ are $(k,t,\lambda)$-designs.)

The following is the main result of this note:
\begin{thm}
For any $k\geq t\geq 2$ and $\lambda\geq 1$ the class $\overrightarrow{P\mathcal D}_{kt\lambda}$ is Ramsey.
\end{thm}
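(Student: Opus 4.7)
The plan is to derive the theorem from Theorem~\ref{thm:main}: it suffices to verify that $P\mathcal D_{kt\lambda}$ is a free amalgamation class of $L$-structures, since $\overrightarrow{P\mathcal D}_{kt\lambda}$ is by definition the class of free orderings of $P\mathcal D_{kt\lambda}$. The hereditary property is immediate from the encoding: if $\str{B}$ is a substructure of $\str{A}\in P\mathcal D_{kt\lambda}$ then $B$ is closed under each $\func{A}{\ell}$, so whenever a $t$-subset $T\subseteq B$ lies in $\dom(\func{A}{\ell})$ the whole neighborhood $N_\str{A}(T)$ sits inside $B$. Equivalently, every block of $\str{A}$ meeting $B$ in at least $t$ vertices lies entirely inside $B$, so conditions (1)--(4) defining $P\mathcal D_{kt\lambda}$ restrict to $\str{B}$. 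Joint embedding is witnessed by disjoint union.

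The main step is the free amalgamation property. Given $\str{A},\str{B}_1,\str{B}_2\in P\mathcal D_{kt\lambda}$ with embeddings $\alpha_i:\str{A}\to\str{B}_i$, I would form $\str{C}$ on the free amalgam of the underlying vertex sets (identifying $\alpha_1(A)$ with $\alpha_2(A)$), take the $k$-ary relation of $\str{C}$ to be the union of those of $\str{B}_1$ and $\str{B}_2$, and let each partial function $\func{C}{\ell}$ be the union of $\func{B_1}{\ell}$ and $\func{B_2}{\ell}$. To verify $\str{C}\in P\mathcal D_{kt\lambda}$ I analyse an arbitrary $t$-subset $T\subseteq C$ in three cases. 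If $T$ meets both $B_1\setminus A$ and $B_2\setminus A$, freeness forbids any block containing $T$, so there is nothing to check. If $T$ sits inside a single $B_i$ but is not contained in $A$, the blocks of $\str{C}$ containing $T$ are precisely the blocks of $\str{B}_i$ containing $T$, so the $\lambda$-bound and the neighborhood encoding are inherited directly from $\str{B}_i$.

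The delicate case, and the main obstacle, is when $T\subseteq A$. Here the embedding condition (iii') applied to $\alpha_1$ and $\alpha_2$ guarantees that every block of $\str{B}_i$ meeting $\alpha_i(A)$ in at least $t$ vertices must already come from $\alpha_i(\widetilde{R}_\str{A})$. Consequently the blocks of $\str{C}$ containing $T$ coincide, regardless of $i$, with the blocks of $\str{A}$ containing $T$; in particular at most $\lambda$ of them exist and the neighborhood of $T$ in $\str{C}$ equals $N_\str{A}(T)$. This is exactly what forces the two partial functions $\func{B_1}{\ell}$ and $\func{B_2}{\ell}$ on tuples enumerating $T$ to agree (both equal the value inherited from $\str{A}$), so the amalgamated functions are consistently defined. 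The language is set up with a symbol $\func{}{\ell}$ for every $k\leq \ell\leq K=(k-t)\lambda+t$, where $K$ is the maximum possible neighborhood size of a $t$-subset in a partial $(k,t,\lambda)$-design, precisely so that embeddings in $P\mathcal D_{kt\lambda}$ coincide with the closure-preserving embeddings described by (iii'); this is what closes the free amalgamation off inside the class. Once free amalgamation is established, Theorem~\ref{thm:main} immediately yields that $\overrightarrow{P\mathcal D}_{kt\lambda}$ is Ramsey.
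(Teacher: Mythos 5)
Your proposal is correct and takes essentially the same route as the paper: both reduce the theorem to verifying that $P\mathcal D_{kt\lambda}$ is a free amalgamation class, construct the free amalgam by taking the union of the relations and of the partial functions, observe that this is consistent precisely because $\alpha_i(A)$ is closed in $\str{B}_i$ (your condition (iii') argument), and then invoke Theorem~\ref{thm:main}. Your three-case analysis of $t$-subsets of the amalgam merely spells out details that the paper's proof sketch leaves implicit.
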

\begin{proof}(sketch)
We apply Theorem~\ref{thm:main} to the class $\overrightarrow{P\mathcal D}_{kt\lambda}$. Thus the only thing we have to check is the free amalgamation of $P\mathcal D_{kt\lambda}$. Thus let $\str{A},\str{B}_1,\str{B}_2$ be structures in $P\mathcal D_{kt\lambda}$, $\alpha_i:\str{A}\to \str{B}_i$ inclusion embeddings. Let $\str{C}=(C,\rel{C}{},(\func{C}{\ell}: k\leq \ell\leq K))$ be defined as follows:
$(C,\rel{C}{})$ is the free amalgam of relational structures $(B_1,\nbrel{\str{B}_1}{})$ and $(B_2,\nbrel{\str{B}_2}{})$ over $(A,\rel{A}{})$. For $\ell=k,k+1,\ldots, K$ we put $\func{C}{\ell}(T)=\nbfunc{\str{B}_i}{\ell}(T)$ whenever $\nbfunc{\str{B}_i}{\ell}(T)$ is defined (here we, without loss of generality, assume that embeddings $\beta_1$ and $\beta_2$ from the definition of amalgam are inclusions). 
 Note that this definition is consistent as $\alpha_i(A)$ is a closed set in $\str{B}_i$, $i=1,2$.
Thus $\str{C}$ is a free amalgam of $\str{A},\str{B}_1,\str{B}_2$ and Theorem~\ref{thm:main} applies.
\end{proof}
This theorem together with Theorem~\ref{thm:completion} implies Theorem \ref{thm:designs}.

\section{Remarks}
\noindent
\textbf{1.} The class $P\mathcal D_{kt\lambda}$ is the age of an ultrahomogeneous \Fraisse{} limit $\str{U}_{kt\lambda}$ which, by the Kechris, Pestov, Todor{\v c}evi{\' c} correspondence~\cite{Kechris2005}, is countable ``geometry-like'' structure with extremely amenable group of automorphisms and uniquely defined universal minimal flow (compare~\cite{Cherlin1999}).

\noindent
\textbf{2.} Note that the essential feature of the above proof is generality of Theorem~\ref{thm:main} and use of function (symbols) leading to the right definition of closed sets. It is easy to see that not closed subset do not form a Ramsey class (as, for example, one can distinguish subsets by their closures). 
Note also that by \cite{Evans3} the $(k,t,\lambda)$-designs have the ordering property, see also~\cite{Nevsetvril1995}.

\noindent
\textbf{3.} This proof and the relationship of designs and models has some further consequences and leads to interesting problems (such as the extension property for partial automorphisms (EPPA)), compare~\cite{Evans2}.

\bibliographystyle{abbrv}
\bibliography{ramsey.bib}
\end{document}